\theoremstyle{plain}
\newtheorem{theorem}{Theorem}
\newtheorem{lemma}{Lemma}
\theoremstyle{proof}
\theoremstyle{definition}
\theoremstyle{remark}
\newtheorem{remark}{Remark}
\theoremstyle{lamma}
\numberwithin{equation}{section}
\numberwithin{lemma}{section}
\numberwithin{theorem}{section}
\numberwithin{remark}{section}
\numberwithin{prop}{section}
\numberwithin{corollary}{section}
\theoremstyle{thmrm}
\numberwithin{conjecture}{section}
\begin{document}
\title{Complete Congruences of Jacobi sums of order $2l^{2}$ with prime $l$}
\author{Md Helal Ahmed and Jagmohan Tanti}
\address{Md Helal Ahmed @ Department of Mathematics, Central University of Jharkhand, Ranchi-835205, India}
\email{ahmed.helal@cuj.ac.in}
\address{Jagmohan Tanti @ Department of Mathematics, Central University of Jharkhand, Ranchi-835205, India}
\email{jagmohan.t@gmail.com}
\keywords{Character; Jacobi sums; Embedding; Congruence's; Cyclotomic field}
\subjclass[2010] {Primary: 11T24, Secondary: 11T22}
\maketitle
\begin{abstract}
The congruences for Jacobi sums of some lower orders have been treated by many authors in the literature. In this paper we establish the 
congruences for Jacobi sums of order $2l^2$ with odd prime $l$ in terms of coefficients of Jacobi sums of order $l$. These congruences are useful to 
obtain algebraic and arithmetic characterizations for Jacobi sums of order $2l^{2}$.
\end{abstract}

\section{Introduction}
Let $e\geq2$ be an integer, $p$ a rational prime,  $q=p^{r}, r \in\mathbb{Z}^{+}$ and $q\equiv 1 \pmod{e}$. Let $\mathbb{F}_{q}$ 
be a finite field of $q$ elements. We can write $q=p^{r}=ek+1$ for some $k\in\mathbb{Z}^{+}$. Let $\gamma$ be a generator of 
the cyclic group $\mathbb{F}^{*}_{q}$ and $\zeta_e=exp(2\pi i/e)$. 
Define a multiplicative character $\chi_e : \ \mathbb{F}^{*}_{q} \longrightarrow \mathbb{Q}(\zeta_e)$ by $\chi_e(\gamma)=\zeta_e$ and extend it on 
$\mathbb{F}_q$ 
by putting $\chi_e(0)=0$. 
For  integers $\displaystyle 0\leq i,j\leq e-1$, the Jacobi sum $J_{e}(i,j)$ is define by

$$J_{e}(i,j)= \sum_{v\in \mathbb{F}_{q}} \chi_e^{i}(v) \chi_e^{j}(v+1).$$

However in the literature a variation of Jacobi sums is also considered and is defined by 
$$J_e(\chi_e^i,\chi_e^j)=\sum_{v\in\mathbb{F}_q}\chi_e^i(v)\chi_e^j(1-v),$$ but are related by $J_e(i,j)=\chi_e^i(-1)J_e(\chi_e^i,\chi_e^j)$.

The study of congruences of Jacobi sums of some small orders is available in the literature. For $l$ an odd prime
Dickson \cite{Dickson} obtained the congruences $J_{l}(1,n)\equiv -1 \pmod{(1-\zeta_{l})^{2}}$ for $1\leq n \leq l-1$. Parnami, Agrawal and Rajwade 
\cite{Parnami} also calculated this separately. Iwasawa \cite{Iwasawa} in $1975$, and in $1981$ Parnami, Agrawal and Rajwade \cite{Parnami 2} 
showed that the above congruences also hold $\pmod{(1-\zeta_{l})^{3}}$. Further in $1995$, Acharya and Katre \cite{Katre} extended the work on finding
the congruences for Jacobi sums and  showed that 
\begin{center}
$J_{2l}(1,n)\equiv -\zeta_l^{m(n+1)}($mod$\ (1-\zeta_{l})^{2})$,
\end{center} 
where $n$ is an odd integer such that $1\leq n \leq 2l-3$ and $m=$ind$_{\gamma}2$.
Also in $1983$, Katre and Rajwade \cite{Rajwade} obtained the congruence of Jacobi sum of order $9$, i.e.,
\begin{center}
$J_{9}(1,1)\equiv -1-($ind$\ 3)(1-\omega)($mod$\ (1-\zeta_{9})^{4})$,
\end{center} 
where $\omega = \zeta_{9}^{3}$.
In $1986$, Ihara \cite{Ihara} showed that if $k>3$ is an odd prime power, then
 \begin{center}
 $J_{k}(i,j)\equiv -1 \pmod{(1-\zeta_{k})^{3}}$.
 \end{center}
 Evans (\cite{Evans}, $1998$) used simple methods to generalize this result for all $k>2$.
Congruences for the Jacobi sums of order $l^{2}$ ($l>3$ prime) were obtained by Shirolkar and Katre \cite{lsquare}. They showed that\\ 
$J_{l^{2}}(1,n)\equiv
 \begin{cases}
  -1 + \sum_{i=3}^{l}c_{i,n} (\zeta_{l^2} -1)^{i} ($mod$\ (1-\zeta_{l^2})^{l+1}) \ \ \ \ \ if\ $gcd$ (l,n)=1, \\
  -1 \ ($mod$\ (1-\zeta_{l^2})^{l+1}) \ \ \ \ \ \ \  \ \ \ \ \ \ \ \ \ \ \ \ \ \ \ \ \ \ \ \ \ \ if\ $gcd$ (l,n)=l.  
 \end{cases}$ \\ \\ 
In this paper, we determine the congruences $\pmod{(1-\zeta_{2l^2})^{l+1}}$ for Jacobi sums of order $2l^{2}$ .  We split the problem into two cases:\\ 
\textbf{Case 1}. $n$ is odd. This case splits into four subcases:\\
 \textbf {Subcase i}. $n=l^{2}$.\\
 \textbf {Subcase ii}. $n=dl, \ $where$\ 1\leq d\leq 2l-1, d$ is an odd and $d\neq l$.\\
 \textbf {Subcase iii}. $1\leq n <2l^2-1$ with gcd$(n,2l^{2})=1$.\\ 
 \textbf {Subcase iv}. $n=2l^2-1$.\\ \\
 
 \noindent \textbf{Case 2}. $n$ is even. In this case the Jacobi sums $J_{2l^{2}}(1,n)$ can be calculated using the relation 
 $J_{2l^{2}}(1,n)=\chi_{2l^2}(-1)J_{2l^{2}}(1,2l^{2}-n-1)$ (which has been shown in the next section).\\ \\  
 Also we calculate here the congruences of Jacobi sums of order $9$ which was not covered in \cite{lsquare} and revised the result of congruences 
 of Jacobi sums of order $l^2$ for $l\geq3$ a prime.
\section{Preliminaries}
Let $\zeta=\zeta_{2l^2}$ and $\chi=\chi_{2l^2}$ then $\chi^2=\chi_{l^2}$ is a character of order $l^2$ and $\zeta_{l^2}=\zeta^2$ is a primitive $l^2$th 
root of unity. The Jacobi sums $J_{l^{2}}(i,j)$ and $J_{2l^{2}}(i,j)$ of order $l^{2}$ and $2l^{2}$ respectively are defined as in the previous section. 
We also have $\zeta=-\zeta_{l^2}^{(l^{2}+1)/2}$.
\subsection{Properties of Jacobi sums} 
In this subsection we discus some properties of Jacobi sums from \cite{Jacobi, lsquare}.
\\ \\
\textbf{Proposition 1}.
If $m+n+s\equiv 0 \ ($mod$\ e)$ then 
\begin{equation*}
J_{e}(m,n)=J_{e}(s,n)=\chi_e^{s}(-1)J_{e}(s,m)=\chi_e^{s}(-1)J_{e}(n,m)=\chi_e^{m}(-1)J_{e}(m,s)=\chi_e^{m}(-1)J_{e}(n,s).
\end{equation*}  	
In particular,
\begin{equation*}
J_{e}(1,m)=\chi_e(-1)J_{e}(1,s)=\chi_e(-1)J_{e}(1,e-m-1).
\end{equation*} 
\textbf{Proposition 2}. 
\ \ \ \ $J_{e}(0,j)= \begin{cases}
-1 \ \ \ \ \ if  \ j\not\equiv 0 \ ($mod$\ e) ,\\
q-2 \ \ if  \ j\equiv 0 \ ($mod$\ e).
\end{cases}$

\ \ \ \ $J_{e}(i,0)=-\chi_e^{i}(-1) \ if \  i\not\equiv 0 \ ($mod$\ e).$
\\ \\ 
\textbf{Proposition 3}.  \ \ \ \ Let $m+n\equiv 0 \ ($mod$\ e)$ \ but not both $m$ and $n$ zero $\pmod{e}$. Then $J_{e}(m,n)=-1.$
\\ \\ \textbf{Proposition 4}.  \ \  For $(k,e)=1$ and $\sigma_k$ a $\mathbb{Q}$ automorphism of $\mathbb{Q}(\zeta_e)$ with $\sigma_k(\zeta_e)=\zeta_e^k$, 
we have $\sigma_{k}J_{e}(m,n)=J_{e}(mk,nk)$. In particular, if $(m,e)=1,\ m^{-1}$ \ denotes the inverse of $m\pmod{e}$ then 
$\sigma_{m^{-1}}J_{e}(m,n)=J_{e}(1,nm^{-1})$.
\\ \\ \textbf{Proposition 5}.  \ \ \ \ 
$J_{2e}(2m,2s)=J_{e}(m,n)$. \\ \\
\textbf{Proposition 6}. \ \ \ Let $m$, $n$, $s$ be integers such that $m+n \not\equiv 0 \ ($mod$\ 2l^{2})$ and $m+s \not\equiv 0 \ ($mod$\ 2l^{2})$. Then 
\begin{equation*}
J_{2l^{2}}(m,n) J_{2l^{2}}(m+n ,s) = \chi^{m}(-1)J_{2l^{2}}(m,s)J_{2l^{2}}(n,s+m).
\end{equation*}
\\  \textbf{Proposition 7}. \ \ \ \ $J_{e}(1,n) \overline{J_{e}(1,n)}= \begin{cases}
q \ \ \ \ \ if \ n\not\equiv 0, -1 \pmod e,\\
1 \ \ \ \ \ if \ n\equiv 0, -1 \pmod e.
\end{cases}$
\begin{proof}
The proofs of 1 - 5 and 7 follows directly using the definition of Jacobi sums (see \cite{Jacobi,lsquare,Wamelen}). The proof of 6 is analogous to the proofs in the 
$2l$ case (see \cite{Katre}). 
\end{proof}
\begin{remark} The Jacobi sums of order $2l^{2}$ can be determined from the Jacobi sums of order $l^{2}$. The Jacobi sums of order $2l^2$
can also be obtained from $J_{2l^{2}}(1,n)$, $1\leq n\leq 2l^{2}-3$ for $n$ odd (or equivalently, $2\leq n\leq 2l^{2}-2$ for $n$ even). Further the Jacobi 
sums of order $l^{2}$ can be evaluated if one knows the Jacobi sums $J_{l^{2}}(1,i)$, $1\leq i\leq \frac{l^{2}-3}{2}$.
\end{remark}

\section{Congruences for Jacobi sums of order $l^{2}$}
The evaluation of congruences for the Jacobi sums of order $l^{2}$, $l>3$ has been done by Shirolkar and Katre \cite{lsquare}. They exclude the case for 
$l=3$. In this section, we settle the case $l=3$ and revise the congruence relations for the Jacobi sums of order $l^{2}$ which shall be used 
to evaluate the congruences relation for the Jacobi sums of order $2l^{2}$ in the next sections.\\,\\ 
For $l=3$, sub-case (2) in Lemma 5.3 \cite{lsquare} reduces to 
$(lx,ly)_{l^{2}}=(ly,lx)_{l^{2}}$, $x\neq y$, $x,y\neq 0$.\\ 
So the number of times these cyclotomic numbers will be counted in all its two forms is
\begin{equation*}
\Bigl\lfloor\dfrac{lx+nly}{l}\Bigr\rfloor+ \Bigl\lfloor\dfrac{ly+nlx}{l}\Bigr\rfloor \equiv 0 \pmod l.
\end{equation*}
So the contribution of $S(n)$ for $l=3$ is same as contribution of $S(n)$ for $l>3$. Hence Theorem 5.4 \cite{lsquare} is precisely revised as the following 
theorem.
\begin{theorem} \label{T1}
Let $l\geq 3$ be a prime and $p^{r}=q\equiv 1 \pmod {l^{2}}$. If $1\leq n \leq l^{2}-1$, then a congruence for $J_{l^{2}}(1,n)$ for a 
finite field $\mathbb{F}_{q}$ is given by \\ $J_{l^{2}}(1,n)\equiv
\begin{cases}
-1 + \sum_{i=3}^{l}c_{i,n} (\zeta_{l^{2}} -1)^{i} \pmod {(1-\zeta_{l^{2}})^{l+1}} \ \ \ \ \ if\ \emph{gcd} (l,n)=1, \\
 -1 \pmod{(1-\zeta_{l^{2}})^{l+1}} \ \ \ \ \ \ \  \ \ \ \ \ \ \ \ \ \ \ \ \ \ \ \ \ \ \ \ \ \ \ if\ \emph{gcd} (l,n)=l,  
\end{cases}$  \\ 
where for $3\leq i \leq l-1$, $c_{i,n}$ are described by equation (5.3) and $c_{l,n}=S(n)$ is given by Lemma 5.3 in \cite{lsquare}.
\end{theorem}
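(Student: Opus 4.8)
The plan is to reduce Theorem \ref{T1} almost entirely to the work already done by Shirolkar and Katre in \cite{lsquare}, isolating the one spot where their argument used the hypothesis $l>3$. The starting point is the classical formula expressing $J_{l^2}(1,n)$ as a sum of $\zeta_{l^2}^{\,a}$ over $a$ running through an index set determined by cyclotomic numbers of order $l^2$, together with the expansion of each $\zeta_{l^2}^{\,a}=(1+(\zeta_{l^2}-1))^a$ in powers of $(\zeta_{l^2}-1)$ up to order $l+1$. Grouping terms by the power of $(\zeta_{l^2}-1)$ produces coefficients $c_{i,n}$; the coefficients $c_{i,n}$ for $3\le i\le l-1$ are governed by equation (5.3) of \cite{lsquare} and are insensitive to whether $l=3$ or $l>3$, while $c_{l,n}=S(n)$ is the sensitive one because its evaluation in \cite{lsquare} passes through Lemma 5.3 there, whose sub-case (2) degenerates when $l=3$.

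First I would record the degeneration explicitly: for $l=3$, sub-case (2) of Lemma 5.3 \cite{lsquare} collapses to the statement that the cyclotomic numbers $(lx,ly)_{l^2}$ and $(ly,lx)_{l^2}$ with $x\neq y$, $x,y\neq 0$, coincide, so that in forming $S(n)$ the relevant pair of terms is counted through the symmetric expression
\begin{equation*}
\Bigl\lfloor\dfrac{lx+nly}{l}\Bigr\rfloor+\Bigl\lfloor\dfrac{ly+nlx}{l}\Bigr\rfloor\equiv 0 \pmod l .
\end{equation*}
Next I would check that this congruence forces the net contribution of these terms to $S(n)$ to vanish modulo $l$, exactly as the corresponding (a priori more elaborate) bookkeeping does in the $l>3$ case; hence the closed form for $S(n)$ given by Lemma 5.3 of \cite{lsquare} is in fact valid for $l=3$ as well, once the statement of that lemma is read with sub-case (2) understood in its degenerate form. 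With $c_{l,n}=S(n)$ thus established uniformly, the congruence for $J_{l^2}(1,n)$ in the $\gcd(l,n)=1$ branch follows for all primes $l\ge 3$.

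For the second branch, $\gcd(l,n)=l$, I would invoke Proposition 5 (the $J_{2e}$-to-$J_e$ reduction in the form $J_{l^2}(lx,ly)=J_l(x,y)$ modulo the appropriate identification of roots of unity) together with Dickson's congruence $J_l(1,m)\equiv -1\pmod{(1-\zeta_l)^2}$ quoted in the introduction, lifting it to the stated modulus $(1-\zeta_{l^2})^{l+1}$ via $(1-\zeta_l)=(1-\zeta_{l^2}^{\,l})$ being divisible by a high power of $(1-\zeta_{l^2})$ in $\mathbb{Z}[\zeta_{l^2}]$; this is exactly the argument of \cite{lsquare} and needs no change for $l=3$. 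Finally I would assemble the two branches into the displayed piecewise formula. The main obstacle is the second step: one must verify carefully that the degenerate sub-case (2) for $l=3$ really does contribute the same amount modulo $l$ as the generic case — i.e. that no term is over- or under-counted when the two cyclotomic numbers merge — since the whole point of the revision is that Theorem 5.4 of \cite{lsquare} was stated with an unnecessary restriction and the $l=3$ case genuinely behaves the same; everything else is routine lifting of congruences along the tower $\mathbb{Q}(\zeta_l)\subset\mathbb{Q}(\zeta_{l^2})$.
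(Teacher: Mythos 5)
Your proposal follows essentially the same route as the paper: both defer the bulk of the work to Shirolkar--Katre and repair only the $l=3$ degeneration of sub-case (2) of their Lemma 5.3, via the same observation that $\bigl\lfloor (lx+nly)/l\bigr\rfloor+\bigl\lfloor (ly+nlx)/l\bigr\rfloor\equiv 0\pmod{l}$, so that $S(n)$ and hence $c_{l,n}$ are unchanged. The one quibble is your sketch of the $\gcd(l,n)=l$ branch (a reduction of the form $J_{l^2}(lx,ly)=J_l(x,y)$ cannot apply to $J_{l^2}(1,lm)$, whose first index is $1$), but since both you and the paper simply defer that branch to \cite{lsquare}, this does not affect the overall argument.
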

\section{Some Lemmas}
\begin{lemma} \label{L1}
Let $p\geq 3$ be a prime and $q=p^{r}\equiv 1 \pmod{2l^{2}}$. If $\chi$ is a nontrivial character of order $2l^{2}$ on the finite field $\mathbb{F}_{q}$, 
then
\begin{equation*}
J_{2l^{2}}(a,a)=\chi^{-a}(4)J_{2l^{2}}(a,l^{2}).
\end{equation*}
\end{lemma}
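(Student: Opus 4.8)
The plan is to reduce the identity $J_{2l^{2}}(a,a)=\chi^{-a}(4)J_{2l^{2}}(a,l^{2})$ to a classical doubling/multiplication relation for Jacobi sums, exactly the analogue of the familiar $J(\chi,\chi)=\chi(4)^{-1}J(\chi,\rho)$ where $\rho$ is the quadratic character. First I would recall that since $\chi$ has order $2l^{2}$, the power $\chi^{l^{2}}$ is the unique character of order $2$ on $\mathbb{F}_{q}^{*}$, i.e. the quadratic (Legendre) character; call it $\rho=\chi^{l^{2}}$. So the right-hand side is $\chi^{-a}(4)\,J_{2l^{2}}(a,l^{2})$, and in the $J_e(\chi^i,\chi^j)$-normalisation (which by the remark in Section 1 differs from $J_e(i,j)$ only by the unit factor $\chi_e^i(-1)$) what we must prove is $J(\chi^{a},\chi^{a})=\chi^{-a}(4)\,J(\chi^{a},\rho)$.

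The key computational step is the standard change of variables. Writing $\psi=\chi^{a}$, start from
\[
J(\psi,\psi)=\sum_{v\in\mathbb{F}_q}\psi(v)\psi(1-v)=\sum_{v}\psi\bigl(v(1-v)\bigr).
\]
Complete the square: $v(1-v)=\tfrac14\bigl(1-(2v-1)^{2}\bigr)$, so $\psi(v(1-v))=\psi(\tfrac14)\,\psi\bigl(1-(2v-1)^{2}\bigr)=\psi(4)^{-1}\psi\bigl(1-(2v-1)^{2}\bigr)$. As $v$ runs over $\mathbb{F}_q$, the quantity $w=(2v-1)^{2}$ runs over the set of squares, each nonzero square being hit exactly twice and $0$ once; this multiplicity $1+\rho(w)$ is precisely encoded by the quadratic character, so
\[
J(\psi,\psi)=\psi(4)^{-1}\sum_{w\in\mathbb{F}_q}\bigl(1+\rho(w)\bigr)\psi(1-w)
=\psi(4)^{-1}\Bigl(\sum_{w}\psi(1-w)+\sum_{w}\rho(w)\psi(1-w)\Bigr).
\]
The first inner sum is $\sum_{w}\psi(1-w)=0$ since $\psi$ is nontrivial, and the second is exactly $J(\rho,\psi)=J(\psi,\rho)$ (Jacobi sums in this normalisation being symmetric, or one adjusts by Proposition 1). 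Hence $J(\psi,\psi)=\psi(4)^{-1}J(\psi,\rho)=\chi^{-a}(4)J(\chi^{a},\chi^{a+?})$; translating back through $\rho=\chi^{l^{2}}$ gives $J(\chi^{a},\chi^{a})=\chi^{-a}(4)J(\chi^{a},\chi^{l^{2}})$, and converting both sides to the $J_{2l^2}(i,j)$ normalisation multiplies each side by the same factor $\chi^{a}(-1)$, leaving the stated identity. One must also check the degenerate cases where $\psi$ or $\psi^2$ is trivial do not occur: since $\chi$ has order $2l^{2}$ and the lemma presumes $\chi^{a}$ enters as a genuine character, $a\not\equiv 0$, and the argument only needs $\psi$ nontrivial for the vanishing of $\sum_w\psi(1-w)$.

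The main obstacle is bookkeeping rather than depth: getting the two normalisations of the Jacobi sum consistent (the $\chi_e^i(-1)$ twist from the remark in Section 1), confirming that $\sum_v \psi(v(1-v))$ with the substitution $w=(2v-1)^2$ really produces the weight $1+\rho(w)$ on the nose (including the single contribution at $w=0$, where $\psi(1)=1$ and $\rho(0)=0$ still fit), and handling the prime $2$ correctly — here $p\ge 3$ is assumed in the lemma, so $4$ is invertible and the completion of the square is legitimate. Once those three points are pinned down, the identity drops out immediately; no estimate or congruence is needed, so this lemma is purely algebraic and should be short.
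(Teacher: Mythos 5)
Your proof is correct and follows essentially the same route as the paper: both reduce the claim to counting the fibres of a quadratic map via the quadratic character $\chi^{l^{2}}$ (you by completing the square, the paper by directly quoting that the number of $\beta$ with $\beta(1+\beta)=\alpha$ equals $1+\chi^{l^{2}}(1+4\alpha)$), then discard the vanishing sum $\sum\chi^{a}(\alpha)$ and pull out the factor $\chi^{-a}(4)$. The only cosmetic difference is that the paper works directly in the $v+1$ normalisation of $J_{e}(i,j)$, which spares you the back-and-forth conversion through the $\chi_e^{i}(-1)$ twist.
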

\begin{proof}
As for $\alpha\in\mathbb{F}_{q}$ the number of $\beta\in\mathbb{F}_{q}$ satisfying the equation $\beta(1+\beta)=\alpha$ is same as 
$1+\chi^{l^{2}}(1+4\alpha)$, we have 
\begin{eqnarray*}
J_{2l^{2}}(a,a)&=&\sum_{\beta\in\mathbb{F}_{q}}\chi^{a}(\beta(1+\beta))= \sum_{\alpha\in\mathbb{F}_{q}}\chi^{a}(\alpha)\{1+\chi^{l^{2}}(1+4\alpha)\} \\
&=& \chi^{-a}(4)\sum_{\alpha\in\mathbb{F}_{q}}\chi^{a}(4\alpha)\chi^{l^{2}}(1+4\alpha) 
= \chi^{-a}(4) J_{2l^{2}}(a,l^{2}).
\end{eqnarray*}
\end{proof}
\begin{lemma} \label{L2}
Let $l\geq 3$ be a prime and $q=p^{r}\equiv 1 \pmod{2l^{2}}$, then 
\begin{equation*}
J_{2l^{2}}(1,l^{2})\equiv \zeta_{l^{2}}^{-w}(-1+\sum_{i=3}^{l} c_{i,(l^{2}-1)/2}(\zeta_{l^{2}}-1)^{i}) \pmod {(1-\zeta_{l^{2}})^{l+1}},
\end{equation*}
where $w=\rm{ind}_\gamma2$ with $\gamma$ a generator of $\mathbb{F}_q^*$.
\end{lemma}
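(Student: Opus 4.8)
The plan is to combine Lemma \ref{L1} (with $a=1$) with the $l=3$-corrected congruence from Theorem \ref{T1}, via Proposition 4 to relate $J_{2l^2}(1,1)$ to a Jacobi sum of order $l^2$. First I would apply Lemma \ref{L1} with $a=1$ to get $J_{2l^2}(1,1)=\chi^{-1}(4)J_{2l^2}(1,l^2)$, hence $J_{2l^2}(1,l^2)=\chi(4)J_{2l^2}(1,1)=\zeta_{l^2}^{w}J_{2l^2}(1,1)$, using the computation in the commented-out remark that $\chi(4)=\chi^2(2)=\zeta_{l^2}^{w}$ since $w=\mathrm{ind}_\gamma 2$. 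So it suffices to show $J_{2l^2}(1,1)\equiv \zeta_{l^2}^{-2w}\bigl(-1+\sum_{i=3}^{l}c_{i,(l^2-1)/2}(\zeta_{l^2}-1)^i\bigr)\pmod{(1-\zeta_{l^2})^{l+1}}$; equivalently, after absorbing one factor of $\zeta_{l^2}^{w}$, to pin down $J_{2l^2}(1,1)$ modulo $(1-\zeta_{l^2})^{l+1}$.

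Next I would bring in the order-$l^2$ character. Write $\chi=\chi_{2l^2}$ and note $\chi^2=\chi_{l^2}$. The key identity is that $J_{2l^2}(1,1)$, up to a root-of-unity twist coming from $\chi(-1)$ and the $\chi^{-1}(4)$ factor, is a Galois conjugate of some $J_{l^2}(1,m)$: indeed by Proposition 5, $J_{2l^2}(2a,2b)=J_{l^2}(a,b)$, so one wants to move the odd index $1$ to an even index. The cleanest route is: $J_{2l^2}(1,1)=\chi^{-1}(4)J_{2l^2}(1,l^2)$, and then compute $J_{2l^2}(1,l^2)$ directly from its definition $\sum_v \chi(v)\chi^{l^2}(v+1)$. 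Since $\chi^{l^2}$ is the quadratic character $\phi$ and $\chi=\phi\cdot\chi_{l^2}^{(l^2+1)/2}$ (using $\zeta=-\zeta_{l^2}^{(l^2+1)/2}$, i.e. $\chi(\gamma)=-\chi_{l^2}^{(l^2+1)/2}(\gamma)$), one rewrites $J_{2l^2}(1,l^2)=\sum_v \phi(v)\chi_{l^2}^{(l^2+1)/2}(v)\phi(v+1)\cdot(\text{sign})$ and, after the substitution $v\mapsto$ square or completing the square as in Lemma \ref{L1}, relate it to $J_{l^2}(1,(l^2-1)/2)$ or its conjugate. The appearance of the subscript $(l^2-1)/2$ on the $c_{i}$ in the statement is a strong hint that $J_{2l^2}(1,l^2)$ reduces to (a twist of) $J_{l^2}(1,(l^2-1)/2)$.

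Once that reduction is in place, I would invoke Theorem \ref{T1} with $n=(l^2-1)/2$: since $\gcd(l,(l^2-1)/2)=1$, it gives $J_{l^2}(1,(l^2-1)/2)\equiv -1+\sum_{i=3}^{l}c_{i,(l^2-1)/2}(\zeta_{l^2}-1)^i\pmod{(1-\zeta_{l^2})^{l+1}}$. Then I must check that the root-of-unity twist relating $J_{2l^2}(1,l^2)$ to this order-$l^2$ sum is exactly $\zeta_{l^2}^{-w}$. Here I would use that $\chi^{-1}(4)=\zeta_{l^2}^{-w}$ together with whatever sign/conjugation factor $\chi^{(l^2+1)/2}(-1)$ or $\phi(-1)$ contributes, and verify modulo $(1-\zeta_{l^2})^{l+1}$ that these extra $2$-power and $\pm1$ factors, when expressed in terms of $\zeta_{l^2}$, collapse to the single power $\zeta_{l^2}^{-w}$ (note $\phi(-1)=\pm1$ and any such sign is $\equiv$ an even power of $\zeta_{l^2}$ only if it is $+1$; so part of the work is confirming the signs cancel, presumably because $q\equiv 1\pmod 4$ is forced or because the two quadratic factors cancel). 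The Galois action (Proposition 4) is what lets me assume $n=(l^2-1)/2$ rather than some other unit, since all $J_{l^2}(1,n)$ with $\gcd(n,l)=1$ have the same shape of congruence and differ only by permuting the $c_{i,n}$.

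The main obstacle I anticipate is the bookkeeping of the twisting factor: carefully tracking the quadratic character $\chi^{l^2}=\phi$, the relation $\zeta=-\zeta_{l^2}^{(l^2+1)/2}$, and the $\chi^{-1}(4)$ term through the completion-of-the-square manipulation, and then proving that their combined effect is precisely multiplication by $\zeta_{l^2}^{-w}$ modulo $(1-\zeta_{l^2})^{l+1}$ — in particular showing no stray sign survives. A secondary technical point is justifying that reduction mod $(1-\zeta_{l^2})^{l+1}$ is compatible with all these identities, i.e. that $\zeta_{l^2}^{-w}$ times the order-$l^2$ congruence can be re-expanded in powers of $(\zeta_{l^2}-1)$ without changing the stated truncated form; this is routine since $\zeta_{l^2}^{-w}=(1+(\zeta_{l^2}-1))^{-w}$ and $(1-\zeta_{l^2})^{l+1}$ is an ideal, but it must be stated.
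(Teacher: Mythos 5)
You correctly identify the two endpoints of the argument --- Lemma \ref{L1} with $a=1$ supplying the factor $\chi^{-1}(4)=\zeta_{l^{2}}^{-w}$, and Theorem \ref{T1} applied to $J_{l^{2}}(1,(l^{2}-1)/2)$ supplying the congruence --- but the bridge between them is missing. The step you describe as ``compute $J_{2l^{2}}(1,l^{2})$ directly from its definition'' by factoring $\chi=\phi\cdot\chi_{l^{2}}^{(l^{2}+1)/2}$ and completing the square is precisely the hard part, and as sketched it does not go through: the resulting sum $\sum_{v}\phi(v)\chi_{l^{2}}^{(l^{2}+1)/2}(v)\phi(v+1)$ is a genuinely mixed Jacobi sum (a quadratic character times an order-$l^{2}$ character), completing the square merely undoes Lemma \ref{L1} and returns you to $J_{2l^{2}}(1,1)$, and converting such a mixed sum into a pure order-$l^{2}$ Jacobi sum would require something like the Hasse--Davenport product relation, which you neither invoke nor prove. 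In effect you have only the single relation $J_{2l^{2}}(1,1)=\zeta_{l^{2}}^{-w}J_{2l^{2}}(1,l^{2})$, which cannot by itself determine either quantity.

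The paper obtains the needed second relation from Proposition 6 with $(m,n,s)=(1,1,l^{2}-1)$, namely $J_{2l^{2}}(1,1)\,J_{2l^{2}}(2,l^{2}-1)=\chi(-1)J_{2l^{2}}(1,l^{2}-1)\,J_{2l^{2}}(1,l^{2})$; Proposition 1 converts $\chi(-1)J_{2l^{2}}(1,l^{2}-1)$ into $J_{2l^{2}}(1,l^{2})$, giving $J_{2l^{2}}(1,1)\,J_{2l^{2}}(2,l^{2}-1)=J_{2l^{2}}(1,l^{2})^{2}$. Substituting $J_{2l^{2}}(1,1)=\zeta_{l^{2}}^{-w}J_{2l^{2}}(1,l^{2})$ from Lemma \ref{L1} and cancelling one factor of $J_{2l^{2}}(1,l^{2})$ yields $J_{2l^{2}}(1,l^{2})=\zeta_{l^{2}}^{-w}J_{2l^{2}}(2,l^{2}-1)$, where $J_{2l^{2}}(2,l^{2}-1)=J_{l^{2}}(1,(l^{2}-1)/2)$ by Proposition 5 because both indices are now even; Theorem \ref{T1} then finishes the proof. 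This use of the three-term product identity to produce an equation in which $J_{2l^{2}}(1,l^{2})$ appears quadratically is the key idea your outline lacks, and without it (or a fully executed Hasse--Davenport-type computation in its place) the proposal does not constitute a proof.
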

\begin{proof}
From Proposition 6 for $m=1$, $n=1$ and $s=l^{2}-1$, we have
\begin{equation} \label{5.1}
J_{2l^{2}}(1,1) J_{2l^{2}}(2 ,l^{2}-1) = \chi(-1)J_{2l^{2}}(1,l^{2}-1)J_{2l^{2}}(1,l^{2}).
\end{equation}
By Proposition 1, we obtain $\chi (-1) J_{2l^{2}}(1, l^{2}-1)= J_{2l^{2}}(1,l^{2})$.\\ 
Now equation (\ref{5.1}) becomes 
\begin{equation} \label{5.2}
J_{2l^{2}}(1,1) J_{2l^{2}}(2, l^{2}-1)= J_{2l^{2}}(1,l^{2})J_{2l^{2}}(1,l^{2}).
\end{equation} 
Again by Proposition 5 and Theorem \ref{T1}, we have
\begin{equation} \label{5.3}
J_{2l^{2}}(2, l^{2}-1)=J_{l^{2}}(1, (l^{2}-1)/2)\equiv -1 + \sum_{i=3}^{l}c_{i,(l^{2}-1)/2} (\zeta_{l^{2}} -1)^{i} \pmod{(1-\zeta_{l^{2}})^{l+1}}. 
\end{equation}
For $w=ind_{\gamma}2$, from \textbf{Lemma \ref{L1}}, we have
\begin{equation} \label{5.4}
J_{2l^{2}}(1,1)=\chi^{-1}(4)J_{2l^{2}}(1,l^{2})=\zeta_{l^{2}}^{-w}J_{2l^{2}}(1,l^{2}).
\end{equation} 
Employing (\ref{5.4}) and (\ref{5.3}) in (\ref{5.2}), we get
\begin{equation*}
J_{2l^{2}}(1,l^{2})\equiv \zeta_{l^{2}}^{-w}(-1+\sum_{i=3}^{l} c_{i,(l^{2}-1)/2}(\zeta_{l^{2}}-1)^{i})\pmod{(1-\zeta_{l^{2}})^{l+1}}.
\end{equation*}
\end{proof}
\begin{lemma} \label{L3}
Let $n$ be an odd integer such that $1\leq n < 2l^{2}-1$ and $\emph{gcd}(n,2l^{2})=1$, then
\begin{align*}
J_{2l^{2}}(1,n)\equiv & \zeta_{l^{2}}^{-w(n+1)}(-1+\sum_{i=3}^{l} c_{i,(l^{2}-1)/2}(\zeta_{l^{2}}-1)^{i})(-1+\sum_{i=3}^{l} c_{i,(l^{2}-1)/2}(\zeta_{l^{2}}^{n}-1)^{i})\\ &(-1+\sum_{i=3}^{l} c_{i,(-1-n)}
(\zeta_{l^{2}}^{(1-l^{2})/2}-1)^{i})\pmod{(1-\zeta_{l^{2}})^{l+1}},
\end{align*}
where $w=\rm{ind}_\gamma2$ with $\gamma$ a generator of $\mathbb{F}_q^*$.
\end{lemma}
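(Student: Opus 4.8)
The plan is to mimic the derivation of Lemma~\ref{L2}, but now apply Proposition~6 in a more symmetric way so that the exponent $n$ is split off. First I would invoke Proposition~6 with a judicious choice of $m$, $n$, $s$ that produces on one side a product $J_{2l^2}(1,1)\,J_{2l^2}(2,\cdot)$-type factor and on the other side the target $J_{2l^2}(1,n)$ multiplied by two further Jacobi sums of the form $J_{2l^2}(2,\ast)$ (equivalently, via Proposition~5, Jacobi sums of order $l^2$). Concretely, taking $m=1$, $n=1$, $s=n-1$ in Proposition~6 (using that $n$ is odd so $n-1$ is even and $2\mid s$), together with Proposition~1 to rewrite $\chi^{m}(-1)J_{2l^2}(1,\ast)$ factors as $J_{2l^2}(1,\ast)$, should yield an identity expressing $J_{2l^2}(1,n)$ as $\zeta_{l^2}^{-w(n+1)}$ times a product of three Jacobi sums each reducible by Proposition~5 and Theorem~\ref{T1}.

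The key steps, in order, would be: (i) write down the Proposition~6 identity and use Propositions~1 and 2 to clear the sign characters $\chi^{\ast}(-1)$ and to combine terms, isolating $J_{2l^2}(1,n)$ on one side; (ii) use Lemma~\ref{L1} / Lemma~\ref{L2} (or rather the computation $J_{2l^2}(1,1)=\zeta_{l^2}^{-w}J_{2l^2}(1,l^2)$ together with the evaluation of $J_{2l^2}(1,l^2)$ from Lemma~\ref{L2}) to handle any $J_{2l^2}(1,1)$ or $J_{2l^2}(1,l^2)$ that appears, which is where the factor $\zeta_{l^2}^{-w}$ and a first copy of $(-1+\sum c_{i,(l^2-1)/2}(\zeta_{l^2}-1)^i)$ enters; (iii) use Proposition~4 (the Galois action $\sigma_k$) on a Jacobi sum of order $l^2$ to convert $J_{l^2}(1,(l^2-1)/2)$-type sums into the arguments appearing in the statement, producing the factor with $(\zeta_{l^2}^{n}-1)^i$ by applying $\sigma_n$ and the factor with $(\zeta_{l^2}^{(1-l^2)/2}-1)^i$ by applying the appropriate automorphism; (iv) collect the three congruence factors and the power of $\zeta_{l^2}$, reducing everything modulo $(1-\zeta_{l^2})^{l+1}$, and check that the $w$-exponent assembles to $w(n+1)$ (tracking each $\chi^{\ast}(4)=\zeta_{l^2}^{\ast w}$ contribution from the Lemma~\ref{L1}-type substitutions).

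The main obstacle I anticipate is bookkeeping rather than conceptual: getting the exponent of $\zeta_{l^2}$ to come out exactly as $-w(n+1)$ requires carefully tracking every application of Lemma~\ref{L1} (each contributing a $\chi^{-a}(4)=\zeta_{l^2}^{-aw}$) and every sign-character $\chi^{\ast}(-1)$ rewritten via $\zeta=-\zeta_{l^2}^{(l^2+1)/2}$, and ensuring these combine coherently; likewise one must confirm that each of the three bracketed factors is genuinely of the form $-1+\sum_{i=3}^{l}c_{i,\ast}(\zeta_{l^2}^{\ast}-1)^i$ with the claimed subscripts after the Galois twists, which hinges on Proposition~4 interacting correctly with Theorem~\ref{T1}. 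A secondary subtlety is justifying that products of such congruences remain valid mod $(1-\zeta_{l^2})^{l+1}$ — this is fine because each factor is $\equiv -1$ modulo $(1-\zeta_{l^2})^{3}$, so the error terms multiply into the ideal $(1-\zeta_{l^2})^{l+1}$ as needed, but it should be stated explicitly.
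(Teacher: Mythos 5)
Your overall shape (Proposition~6 plus Proposition~1, then reduce each surviving factor to an order-$l^2$ Jacobi sum via Proposition~5, Lemma~\ref{L2} and Theorem~\ref{T1}, then multiply the three congruences) matches the paper's strategy, but your concrete starting identity does not work, and you are missing the one step that makes the argument close. With $m=1$, $n=1$, $s=n-1$, Proposition~6 gives
\begin{equation*}
J_{2l^{2}}(1,1)\,J_{2l^{2}}(2,n-1)=\chi(-1)\,J_{2l^{2}}(1,n-1)\,J_{2l^{2}}(1,n),
\end{equation*}
and the factor $J_{2l^{2}}(1,n-1)$ (first argument odd, second even) is \emph{not} of the form $J_{2l^2}(2,\ast)$ and cannot be reduced by Proposition~5; it is just another unknown order-$2l^2$ Jacobi sum of the same difficulty as the target (indeed $J_{2l^2}(1,n-1)=\chi(-1)J_{2l^2}(1,2l^2-n)$ with $2l^2-n$ odd and coprime to $2l^2$). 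So your identity leaves $J_{2l^2}(1,n)$ entangled with an unresolved factor, contrary to your claim that all three companions are reducible by Theorem~\ref{T1}.

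The paper instead applies Proposition~6 with $m=1$, second index $n$, $s=l^{2}-1$, giving $J_{2l^{2}}(1,n)\,J_{2l^{2}}(1+n,l^{2}-1)=J_{2l^{2}}(1,l^{2})\,J_{2l^{2}}(n,l^{2})$ after Proposition~1; here the right side is fully known, since $J_{2l^2}(n,l^2)=\tau_n J_{2l^2}(1,l^2)$ and both factors are handled by Lemma~\ref{L2} (this is what produces the two brackets with subscript $(l^2-1)/2$, one at $\zeta_{l^2}$ and one at $\zeta_{l^2}^{n}$, and the exponent $-w(n+1)=-w-wn$). The unknown cofactor $J_{2l^{2}}(1+n,l^{2}-1)$ is then eliminated by the conjugation trick: multiply both sides by $\overline{J_{2l^{2}}(1+n,l^{2}-1)}$ and use Proposition~7 to replace $J\overline{J}$ by $q\equiv 1$, while the conjugate itself, having both arguments even, reduces via Proposition~5 and a Galois twist to an order-$l^2$ sum covered by Theorem~\ref{T1} — this is the source of the third bracket with $c_{i,(-1-n)}$ and $\zeta_{l^2}^{(1-l^2)/2}$. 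That $J\overline{J}=q$ step is nowhere in your proposal, and without it (or some substitute) you cannot isolate $J_{2l^{2}}(1,n)$. Your final remark about multiplying congruences is a non-issue: for algebraic integers, $A\equiv A'$ and $B\equiv B'$ modulo an ideal already give $AB\equiv A'B'$, no appeal to each factor being $\equiv -1$ modulo $(1-\zeta_{l^2})^3$ is needed.
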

\begin{proof}
By Proposition 6 for $m=1$ and $s=l^{2}-1$, we have
\begin{equation*}
J_{2l^{2}}(1,n) J_{2l^{2}}(1+n ,l^{2}-1) = \chi(-1)J_{2l^{2}}(1,l^{2}-1)J_{2l^{2}}(n,l^{2}).
\end{equation*}
Applying Proposition 1 to get
\begin{equation*}
J_{2l^{2}}(1,n) J_{2l^{2}}(1+n ,l^{2}-1) = J_{2l^{2}}(1,l^{2})J_{2l^{2}}(n,l^{2}).
\end{equation*}
Applying $\tau_{n}:\zeta_{2l^{2}}\rightarrow\zeta_{2l^{2}}^{n}$ (a $\mathbb{Q}$ automorphism of $\mathbb{Q}(\zeta_{2l^2})$) second term in RHS, we get
\begin{equation*}
J_{2l^{2}}(1,n) J_{2l^{2}}(1+n ,l^{2}-1) =J_{2l^{2}}(1,l^{2})\tau_{n}J_{2l^{2}}(1,l^{2}).
\end{equation*}
Now as from \cite{Jacobi,lsquare,Wamelen}, we have 
$J_{2l^{2}}(1+n,l^{2}-1) \overline{J_{2l^{2}}(1+n,l^{2}-1)}=q$ and so, we have
\begin{equation*}
J_{2l^{2}}(1,n)J_{2l^{2}}(1+n,l^{2}-1) \overline{J_{2l^{2}}(1+n,l^{2}-1)}= J_{2l^{2}}(1,l^{2})\tau_{n}J_{2l^{2}}(1,l^{2})
\overline{J_{2l^{2}}(1+n,l^{2}-1)}.
\end{equation*}
This implies
\begin{equation}  \label{6.5}
J_{2l^{2}}(1,n)\ q= J_{2l^{2}}(1,l^{2})\tau_{n}J_{2l^{2}}(1,l^{2})\overline{J_{2l^{2}}(1+n,l^{2}-1)}.
\end{equation}
From Proposition 5 and applying $\sigma_{n}:\zeta_{l^{2}}\rightarrow\zeta_{l^{2}}^{n}$ (a $\mathbb{Q}$ automorphism of $\mathbb{Q}(\zeta_{l^2})$), we get,
\begin{align*}
\overline{J_{2l^{2}}(1+n,l^{2}-1)}=J_{2l^{2}}(-1-n,1-l^{2})=J_{l^{2}}((-1-n)/2,(1-l^{2})/2)=\sigma_{(1-l^{2})/2}(J_{l^{2}}(1,-1-n).
\end{align*}
Now from Theorem \ref{T1}, we get
\begin{equation} \label{6.4}
\overline{J_{2l^{2}}(1+n,l^{2}-1)}\equiv-1+\sum_{i=3}^{l} c_{i,(-1-n)}(\zeta_{l^{2}}^{(1-l^{2})/2}-1)^{i}\pmod {(1-\zeta_{l^{2}})^{l+1}}.
\end{equation}
Employing (\ref{6.4}) and \textbf{Lemma \ref{L2}} in (\ref{6.5}), we get
\begin{align*}
J_{2l^{2}}(1,n)\equiv & \zeta_{l^{2}}^{-w(n+1)}(-1+\sum_{i=3}^{l} c_{i,(l^{2}-1)/2}(\zeta_{l^{2}}-1)^{i})(-1+\sum_{i=3}^{l} c_{i,(l^{2}-1)/2}(\zeta_{l^{2}}^{n}-1)^{i})\\ &(-1+\sum_{i=3}^{l} c_{i,(-1-n)}
(\zeta_{l^{2}}^{(1-l^{2})/2}-1)^{i})\pmod{(1-\zeta_{l^{2}})^{l+1}}.
\end{align*}
\end{proof}
\begin{remark} \label{remark1}
For $n = 2l^{2}-1$ by Proposition 3, we have $J_{2l^{2}}(1,n)\equiv -1$. 
\end{remark}
\begin{lemma} \label{L5}
Let $d\neq l$, $1\leq d \leq 2l-1$ be an odd positive integer, then
\begin{align*} J_{2l^{2}}(1,dl)\equiv-\zeta_{l^{2}}^{-w(dl+1)}(-1+\sum_{i=3}^{l} c_{i,(l^{2}-1)/2}(\zeta_{l^{2}}-1)^{i})(-1+\sum_{i=3}^{l} c_{i,dl-1}(\zeta_{l^{2}}^{(-1-dl)/2}-1)^{i})\pmod{(1-\zeta_{l^{2}})^{l+1}},
\end{align*}
where $w=\rm{ind}_\gamma2$ with $\gamma$ a generator of $\mathbb{F}_q^*$.
\end{lemma}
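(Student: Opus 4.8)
The plan is to run the argument of Lemma~\ref{L3}, the only new feature being that $\gcd(dl,2l^2)=l\neq 1$, which rules out the automorphism $\tau_{dl}$ used there and forces a descent to order $2l$ for the factor $J_{2l^2}(dl,l^2)$. First I would apply Proposition~6 with $m=1$, $n=dl$, $s=l^2-1$ (the hypotheses $1+dl\not\equiv 0$ and $1+l^2\not\equiv 0\pmod{2l^2}$ are immediate) and combine it with Proposition~1, which gives $\chi(-1)J_{2l^2}(1,l^2-1)=J_{2l^2}(1,l^2)$, to obtain
\begin{equation*}
J_{2l^2}(1,dl)\,J_{2l^2}(1+dl,l^2-1)=J_{2l^2}(1,l^2)\,J_{2l^2}(dl,l^2).
\end{equation*}
Multiplying by $\overline{J_{2l^2}(1+dl,l^2-1)}$ and using $J_{2l^2}(1+dl,l^2-1)\,\overline{J_{2l^2}(1+dl,l^2-1)}=q$ (legitimate since $1+dl$, $l^2-1$, and $(d+l)l$ are all $\not\equiv 0\pmod{2l^2}$, the last one because $d\neq l$) gives
\begin{equation*}
q\,J_{2l^2}(1,dl)=J_{2l^2}(1,l^2)\,J_{2l^2}(dl,l^2)\,\overline{J_{2l^2}(1+dl,l^2-1)}.
\end{equation*}
Since $l\geq 3$ we have $l(l-1)\geq l+1$, so $q\equiv 1\pmod{l^2}$ forces $q\equiv 1\pmod{(1-\zeta_{l^2})^{l+1}}$; thus $q$ is harmless modulo $(1-\zeta_{l^2})^{l+1}$ and it remains to evaluate the three factors on the right.

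The first factor is handled by Lemma~\ref{L2}: $J_{2l^2}(1,l^2)\equiv\zeta_{l^2}^{-w}\bigl(-1+\sum_{i=3}^{l}c_{i,(l^2-1)/2}(\zeta_{l^2}-1)^i\bigr)$. The third factor I would treat exactly as in Lemma~\ref{L3}: since $-1-dl$ and $1-l^2$ are both even, Proposition~5 gives
\begin{equation*}
\overline{J_{2l^2}(1+dl,l^2-1)}=J_{2l^2}(-1-dl,1-l^2)=J_{l^2}\!\Bigl(\tfrac{-1-dl}{2},\tfrac{1-l^2}{2}\Bigr),
\end{equation*}
and by Proposition~4 this equals $\sigma_{(-1-dl)/2}\!\bigl(J_{l^2}(1,(-1-dl)^{-1})\bigr)$. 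The identity $(dl-1)(dl+1)=d^2l^2-1\equiv -1\pmod{l^2}$ shows $(-1-dl)^{-1}\equiv dl-1\pmod{l^2}$, and, $\gcd(dl-1,l)=1$ being clear, Theorem~\ref{T1} applies to $J_{l^2}(1,dl-1)$; applying $\sigma_{(-1-dl)/2}$ then produces precisely the second bracket $-1+\sum_{i=3}^{l}c_{i,dl-1}\bigl(\zeta_{l^2}^{(-1-dl)/2}-1\bigr)^i$ of the statement.

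The genuinely new point — and the step I expect to be the main obstacle — is the middle factor $J_{2l^2}(dl,l^2)$. Since $\chi^{l}$ has order $2l$ and $\chi^{l^2}$ is the quadratic character, one has $J_{2l^2}(dl,l^2)=J_{2l}(d,l)$, and as $\gcd(d,2l)=1$, Proposition~4 in order $2l$ gives $J_{2l}(d,l)=\sigma_d\!\bigl(J_{2l}(1,l)\bigr)$. Now I would rerun the proof of Lemma~\ref{L2} in order $2l$: Proposition~6 with $m=n=1$, $s=l-1$, together with Proposition~1 and the order-$2l$ analogue of Lemma~\ref{L1} (namely $J_{2l}(1,1)=\zeta_l^{-w}J_{2l}(1,l)$), yields the exact identity $J_{2l}(1,l)=\zeta_l^{-w}J_{2l}(2,l-1)=\zeta_l^{-w}J_l(1,(l-1)/2)$, and Dickson's congruence $J_l(1,(l-1)/2)\equiv -1\pmod{(1-\zeta_l)^2}$ then gives $J_{2l}(1,l)\equiv -\zeta_l^{-w}\pmod{(1-\zeta_l)^2}$. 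Since $(1-\zeta_l)\mathbb{Z}[\zeta_{l^2}]=(1-\zeta_{l^2})^{l}\mathbb{Z}[\zeta_{l^2}]$ and $2l\geq l+1$, this descends to $J_{2l^2}(dl,l^2)\equiv -\zeta_l^{-wd}=-\zeta_{l^2}^{-wdl}\pmod{(1-\zeta_{l^2})^{l+1}}$. Feeding these three evaluations into the $q$-free identity of the first paragraph, the product of the three contributions is $\zeta_{l^2}^{-w}\cdot(-\zeta_{l^2}^{-wdl})=-\zeta_{l^2}^{-w(dl+1)}$ times the two surviving $c$-sums, which is the asserted congruence. It is worth noting that in the middle factor the ``missing'' sum $\sum_{i\geq 3}c_{i,(l^2-1)/2}(\zeta_{l^2}^{dl}-1)^i$ collapses modulo $(1-\zeta_{l^2})^{l+1}$ because the $(1-\zeta_{l^2})$-adic valuation of $(\zeta_l^{d}-1)^i$ is $il\geq 3l\geq l+1$; this is exactly why Lemma~\ref{L5} carries two $c$-sums, whereas Lemma~\ref{L3} carries three.
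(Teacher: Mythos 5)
Your proposal is correct and follows essentially the same route as the paper: Proposition 6 with $(m,n,s)=(1,dl,l^2-1)$, Proposition 1, multiplication by $\overline{J_{2l^2}(1+dl,l^2-1)}$, Lemma \ref{L2} for $J_{2l^2}(1,l^2)$, Proposition 5 plus Theorem \ref{T1} for the conjugate factor, and a descent to order $2l$ for $J_{2l^2}(dl,l^2)$. The only (harmless) deviation is that you re-derive $J_{2l}(d,l)\equiv-\zeta_l^{-wd}\pmod{(1-\zeta_l)^2}$ from scratch via an order-$2l$ rerun of Lemma \ref{L2} and Dickson's congruence, where the paper simply cites Lemma 3 and Proposition 3 of Acharya--Katre; your bookkeeping $(1-\zeta_l)^2=(1-\zeta_{l^2})^{2l}\subseteq(1-\zeta_{l^2})^{l+1}$ is in fact a cleaner justification than the paper's step (\ref{5.9}).
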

\begin{proof}
From Proposition 6, for $m=1$, $n=dl$ and $s=l^{2}-1$, we have
\begin{equation} \label{5.7}
J_{2l^{2}}(1,dl) J_{2l^{2}}(1+dl ,l^{2}-1) = \chi(-1)J_{2l^{2}}(1,l^{2}-1)J_{2l^{2}}(dl,l^{2}).
\end{equation}
Applying Proposition 1, we get
\begin{equation} \label{5.8}
J_{2l^{2}}(1,dl) J_{2l^{2}}(1+dl ,l^{2}-1) = J_{2l^{2}}(1,l^{2})J_{2l^{2}}(dl,l^{2}).
\end{equation} 
Now again from \cite{Jacobi,lsquare,Wamelen}, we have
$J_{2l^{2}}(1+dl,l^{2}-1) \overline{J_{2l^{2}}(1+dl,l^{2}-1)}=q$ and so
we have
\begin{equation*}
J_{2l^{2}}(1,dl) J_{2l^{2}}(1+dl ,l^{2}-1)\overline{J_{2l^{2}}(1+dl,l^{2}-1)} = J_{2l^{2}}(1,l^{2})J_{2l^{2}}(dl,l^{2})\overline{J_{2l^{2}}(1+dl,l^{2}-1)}.
\end{equation*}
This implies
\begin{equation}  \label{6.3}
J_{2l^{2}}(1,dl)\ q = J_{2l^{2}}(1,l^{2})J_{2l^{2}}(dl,l^{2})\overline{J_{2l^{2}}(1+dl,l^{2}-1)}.
\end{equation}
By Proposition 5, we get 
\begin{align*}
\overline{J_{2l^{2}}(1+dl,l^{2}-1)}=J_{2l^{2}}(-1-dl,1-l^{2})=J_{l^{2}}((-1-dl)/2,(1-l^{2})/2)=\sigma_{(-1-dl)/2}(J_{l^{2}}(1,dl-1).
\end{align*}
Now from Theorem \ref{T1}, we get
\begin{equation} \label{6.6}
\overline{J_{2l^{2}}(1+dl,l^{2}-1)}\equiv-1+\sum_{i=3}^{l} c_{i,dl-1}(\zeta_{l^{2}}^{(-1-dl)/2}-1)^{i}\pmod{(1-\zeta_{l^{2}})^{l+1}}.
\end{equation}
As $\chi$ is of order $2l^{2}$, $\chi^{l}$ is of order $2l$, so by Lemma $3$ \cite{Katre}, we obtain 
\begin{align*}
J_{2l^{2}}(dl,l^{2})&=J_{2l}(d,l)=(\chi^{l})^{d}(4)J_{2l}(d,d)=\chi^{2ld}(2)\eta_{d}(J_{2l}(1,1)).
\end{align*}
For $w=ind_{\gamma}2$, from Proposition 3 \cite{Katre}, we get
\begin{align*}
J_{2l}(1,1)\equiv-\zeta_{l}^{-2w}\pmod {(1-\zeta_{l})^{2}}.
\end{align*}
Applying $\eta_{d}:\zeta_{l}\rightarrow\zeta_{l}^{d}$ (a $\mathbb{Q}$ automorphism of $\mathbb{Q}{\zeta_l}$), we get
\begin{align*}
\eta_{d}(J_{2l}(1,1))\equiv-\zeta_{l}^{-2wd}\pmod {(1-\zeta_{l}^{d})^{2}}\equiv-\zeta_{l}^{-2wd}\pmod{(1-\zeta_{l})^{2}}.
\end{align*}
Thus 
\begin{align*}
J_{2l^{2}}(dl,l^{2})&\equiv\zeta_{l}^{dw}(-\zeta_{l}^{-2wd})\pmod{(1-\zeta_{l})^{2}}\equiv-\zeta_{l}^{-wd}\pmod {(1-\zeta_{l})^{2}}\\ & \equiv-\zeta_{l^{2}}^{-wdl}\pmod{(1-\zeta_{l^{2}}^{l})^{2}}\equiv-\zeta_{l^{2}}^{-wdl}\pmod{(1-\zeta_{l^{2}})^{2}}.
\end{align*}
This implies 
\begin{equation}\label{5.9}
J_{2l^{2}}(dl,l^{2})(1-\zeta_{l^{2}})^{l-1}\equiv-\zeta_{l^{2}}^{-wdl}(1-\zeta_{l^{2}})^{l-1} \pmod {(1-\zeta_{l^{2}})^{l+1}}.
\end{equation}
Employing  (\ref{6.6}), (\ref{5.9}) and Lemma \ref{L2} in (\ref{6.3}), we get
\begin{align*} J_{2l^{2}}(1,dl)\equiv-\zeta_{l^{2}}^{-w(dl+1)}(-1+\sum_{i=3}^{l} c_{i,(l^{2}-1)/2}(\zeta_{l^{2}}-1)^{i})(-1+\sum_{i=3}^{l} c_{i,dl-1}(\zeta_{l^{2}}^{(-1-dl)/2}-1)^{i}) \pmod {(1-\zeta_{l^{2}})^{l+1}}.
\end{align*}
\end{proof}
\section{Main Theorem}
In this section we establish the congruences of Jacobi sums of order $2l^2$ in terms of coefficients of Jacobi sums of order $l$.
\begin{theorem} 
Let $l\geq 3$ be a prime and $q=p^{r}\equiv 1 \pmod {2l^{2}}$. If $1\leq n \leq 2l^{2}-1$ and $1\leq d \leq 2l-1$ are odd integer, then a congruence for $J_{2l^{2}}(1,n)$ over $\mathbb{F}_{q}$ is given by
\begin{equation*}
J_{2l^{2}}(1,n)\equiv \begin{cases}
\zeta_{l^{2}}^{-w}(-1+\sum_{i=3}^{l} c_{i,(l^{2}-1)/2}(\zeta_{l^{2}}-1)^{i}) \pmod {(1-\zeta_{l^{2}})^{l+1}},\ \mbox{if $n=l^{2}$}, \\ \\ 
-\zeta_{l^{2}}^{-w(dl+1)}(-1+\sum_{i=3}^{l} c_{i,(l^{2}-1)/2}(\zeta_{l^{2}}-1)^{i})(-1+\sum_{i=3}^{l} c_{i,dl-1}(\zeta_{l^{2}}^{(-1-dl)/2}-1)^{i})\\
\pmod {(1-\zeta_{l^{2}})^{l+1}}  \ \mbox{if $d\neq l$ odd integer and $n=dl$},\\ \\
  \zeta_{l^{2}}^{-w(n+1)}(-1+\sum_{i=3}^{l} c_{i,(l^{2}-1)/2}(\zeta_{l^{2}}-1)^{i})(-1+\sum_{i=3}^{l} c_{i,(l^{2}-1)/2}(\zeta_{l^{2}}^{n}-1)^{i})\\ (-1+\sum_{i=3}^{l} c_{i,(-1-n)}
  (\zeta_{l^{2}}^{(1-l^{2})/2}-1)^{i})\pmod {(1-\zeta_{l^{2}})^{l+1}},\, \mbox{if \, $gcd(n,2l^{2})=1$, $1\leq n < 2l^{2}-1$}, \\ \\
  -1 \pmod {(1-\zeta_{l^{2}})^{l+1}},\, \mbox{if \, $n = 2l^{2}-1$},  
\end{cases}
\end{equation*}
where $c_{i,n}$ are as described in the Theorem \ref{T1} and $w=\rm{ind}_\gamma2$ with $\gamma$ a generator of $\mathbb{F}_q^*$.
\vskip2mm
If $n$ is even, $2\leq n\leq 2l^{2}-2$ the congruences for Jacobi sums $J_{2l^{2}}(1,n)$ can be calculated using the relation 
$J_{2l^{2}}(1,n)=\chi(-1)J_{2l^{2}}(1,2l^{2}-n-1)$. Also if $d$ in the theorem is even then $J_{2l^2}(1,dl)=\chi(-1)J_{2l^2}(1, 2l^2-dl-1)$ and $2l^2-dl-1$
is odd. Thus the congruences for $J_{2l^2}(1,n)$ gets completely determined and hence that of all Jacobi sums of order $2l^2$.
\end{theorem}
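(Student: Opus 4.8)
The plan is to observe that the four branches of the displayed congruence are, one by one, exactly the conclusions of the lemmas of the previous section, so the theorem is a consolidation rather than a fresh argument. First I would dispose of the case $n=l^{2}$: this is precisely Lemma \ref{L2}, which already gives $J_{2l^{2}}(1,l^{2})\equiv\zeta_{l^{2}}^{-w}\bigl(-1+\sum_{i=3}^{l}c_{i,(l^{2}-1)/2}(\zeta_{l^{2}}-1)^{i}\bigr)$ modulo $(1-\zeta_{l^{2}})^{l+1}$. Next, for $n=dl$ with $d\neq l$ odd and $1\le d\le 2l-1$, the claimed formula is verbatim the conclusion of Lemma \ref{L5}. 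Then, for $1\le n<2l^{2}-1$ with $\gcd(n,2l^{2})=1$---equivalently $n$ odd and prime to $l$---the formula is verbatim Lemma \ref{L3}. Finally, for $n=2l^{2}-1$ one has $1+n\equiv 0\pmod{2l^{2}}$, so Proposition 3 (recorded as Remark \ref{remark1}) applies and gives $J_{2l^{2}}(1,n)=-1$, hence the congruence $\equiv-1$. Since every odd $n$ in the range $1\le n\le 2l^{2}-1$ lies in exactly one of these four cases, the first assertion of the theorem follows.

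For the concluding remarks I would invoke the special case $J_{e}(1,m)=\chi_{e}(-1)J_{e}(1,e-m-1)$ of Proposition 1 with $e=2l^{2}$. If $n$ is even and $2\le n\le 2l^{2}-2$, then $2l^{2}-n-1$ is odd and lies in $[1,2l^{2}-1]$, so $J_{2l^{2}}(1,n)=\chi(-1)J_{2l^{2}}(1,2l^{2}-n-1)$ is pinned down by the odd-index case just established; likewise, if $d$ is even then $2l^{2}-dl-1$ is odd, and the same reflection reduces $J_{2l^{2}}(1,dl)$ to an already-treated Jacobi sum. Combined with the reduction noted in the Preliminaries---that every Jacobi sum of order $2l^{2}$ is obtained from the sums $J_{2l^{2}}(1,n)$---this shows the whole family of congruences is determined.

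The one point I would take care to verify, which is bookkeeping rather than a genuine obstacle, is that the partition of the odd residues above is exhaustive and disjoint and that the hypotheses of the cited results hold throughout each range: the non-vanishing conditions $m+n,\ m+s\not\equiv 0\pmod{2l^{2}}$ required to apply Proposition 6 inside Lemmas \ref{L2}, \ref{L3} and \ref{L5}, and the fact that the halved indices such as $(l^{2}-1)/2$, $(1-l^{2})/2$ and $(-1-dl)/2$ are well defined, which holds since $l$ is an odd prime and $2$ is invertible modulo $l^{2}$. Once Theorem \ref{T1} and Lemmas \ref{L2}, \ref{L3} and \ref{L5} are in hand, no conceptual difficulty remains.
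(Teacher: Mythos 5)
Your proposal is correct and matches the paper exactly: the paper's proof is a one-line assembly of Lemmas \ref{L2}, \ref{L5}, \ref{L3} and Remark \ref{remark1} for the four odd cases, with the even case reduced via Proposition 1, which is precisely what you do (with some added, harmless bookkeeping).
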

\begin{proof}
The proof of the theorem is immediate from the above-mentioned Lemma's and remark \ref{remark1}.
\end{proof}

\begin{remark}
The prime ideal decompositions and absolute values of Jacobi sums are already there in the literature, adding these congruences gives idea to determine 
Jacobi sums of order $2l^2$ with less complexity. P. Van Wamelen \cite{Wamelen} has developed an idea to establish certain algebraic conditions  
using which one can determine the Jacobi sums uniquely. Our congruence conditions are in terms of coefficients of Jacobi sums of order $l$ and are modulo 
$(1-\zeta_{l^2})^{l+1}$. These congruences are appropriate and have deterministic capacity for Jacobi sums of order $2l^2$.
\end{remark}
\begin{remark}
Establishing congruences for Jacobi sums is the significant advancement in a more straightforward solution of the Jacobi sums problem and thus for the cyclotomic problem. L. E. Dickson laid the foundation stone of cyclotomic numbers and he demonstrated how the Jacobi sums play a significant role in this theory. In \cite{Helal1}, we have given the formula for cyclotomic numbers of order $2l^2$ in terms of the coefficients of Jacobi sums of order $2l^2$. Therefore, these congruences are useful in calculations of these Jacobi sums. 
\end{remark}
\begin{remark}
	The Jacobi sums and cyclotomic numbers have incredible applications in various field, such as coding theory, cryptosystems, primality testing, difference sets, and so forth \cite{Adleman1,Buhler1,Mihailescu2,Whiteman1}. Thus congruences give us an analogous result to evaluate Jacobi sums, so cyclotomic numbers.
\end{remark}
\begin{remark}
	It remains an open problem to develop a methodology to determine Jacobi sums using Stickelberger's theorem \cite{Adhikari1} along with these determined congruences.
\end{remark}

\section*{Acknowledgments}
\noindent
The authors would like to thank Central University of Jharkhand, Ranchi, Jharkhand, India for the support during preparation of this research article.

\end{document}